\makeatletter \@namedef{subjclassname@2010}{%
  \textup{2010} Mathematics Subject Classification}
\newcounter{thm} \numberwithin{thm}{section}
\newtheorem{Theorem}[thm]{Theorem}
\newtheorem{Lemma}[thm]{Lemma}
\newtheorem{Corollary}[thm]{Corollary}
\newtheorem*{Claim}{Claim}
	\newcommand{\RR}[0]{\mathbb R}
\newcommand{\eps}[0]{\varepsilon}
\author[B. Hanson]{Brandon Hanson} \address{University of Georgia\\
Athens, GA, USA}
\email{brandon.w.hanson@gmail.com}
\author[O. Roche-Newton]{Oliver Roche-Newton} \address{Johann Radon Institute for Computational and Applied Mathematics\\
Linz, Austria}
\email{o.rochenewton@gmail.com}
\author[M. Rudnev]{Misha Rudnev} \address{University of Bristol\\ 
Bristol, UK}
\email{misharudnev@gmail.com}
\date{}
\begin{document}

\baselineskip=17pt

\title{Higher convexity and iterated sum sets}

\date{}
\maketitle

\begin{abstract}

Let $f$ be a smooth real function with strictly monotone first  $k$ derivatives.
We show that for a finite set $A$, with $|A+A|\leq K|A|$, 
\[
|2^kf(A)-(2^k-1)f(A)|\gg_k |A|^{k+1-o(1)}/K^{O_k(1)}.
\] 
We deduce several new sum-product type implications, e.g. that $A+A$ being small implies unbounded growth for a many enough times iterated product set $A \cdots A$.
\end{abstract}

\section{Introduction}
When $A$ is a subset of an Abelian group, define its sumsets via
\[A+A=\{x_1+x_2:x_1,x_2\in A\},\] 
and 
\[mA-nA=\{(x_1+\cdots+x_m)-(x_{m+1}+\cdots+x_{m+n}):x_1,\ldots,x_{m+n}\in A\}.\] Additive combinatorics is interested in the cardinality and structure of sumsets. Now suppose $A=\{a_1<\ldots<a_N\}$ is a set of real numbers, listed in increasing order. The sequence of first differences has terms 
\[
a_{i+1}-a_i,\,\,\,\, 1 \leq i \leq N-1,
\] and when this sequence is strictly increasing, we say $A$ is \emph{convex}. Erd\H{o}s posed the problem of estimating $|A+A|$ when $A$ is convex. It is no loss of generality to assume $A=f([N])$, where $[N]=\{1,\ldots,N\}$ and $f$ is a differentiable function with strictly increasing first derivative.

The first non-trivial lower bound was given by Hegyv\'ari \cite{H}. Following Elekes's introduction of incidence geometry to sum-product type problems in \cite{Elekes}, results concerning sum set estimates for convex $A$ were generalised by Elekes, Nathanson and Ruzsa in \cite{ENR}:\footnote{We shall use throughout the notation $X\ll Y$ to mean $X\leq CY$ for some absolute constant $C$. Writing $X\ll_k Y$ means there is a constant $C_k$ which depends only on $k$ such that $X\leq C_kY$. All the sets, denoted by uppercase letters are finite, of cardinality $|\cdot|$.}
\begin{Theorem}[Elekes, Nathanson and Ruzsa]\label{ENR}
Let $A,B,C \subset \mathbb R$ and let $f: \mathbb R \rightarrow \mathbb R$ be a strictly convex function. Then,
\begin{equation} \label{ENRgen}
|A+B|^2|f(A)+C|^2 \gg |A|^3|B||C|.
\end{equation}
In particular, if $A$ is a convex set then $|A+A| \gg |A|^{3/2}$.
\end{Theorem}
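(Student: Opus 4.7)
The plan is to establish \eqref{ENRgen} through an incidence-geometric argument in the spirit of Elekes, in which convexity is what makes a natural family of curves behave like pseudo-lines. Set $P = (A+B) \times (f(A)+C) \subset \RR^2$, so that $|P| = |A+B|\cdot|f(A)+C|$, and for each $(b,c) \in B \times C$ introduce the curve
\[
\gamma_{b,c} = \{(x,\,f(x-b)+c) : x \in \RR\},
\]
which is a translate of the graph of $f$. Taking $x = a+b$ for $a \in A$ places the point $(a+b, f(a)+c) \in P$ on $\gamma_{b,c}$, and these $|A|$ points are distinct as $a$ varies. Writing $\Gamma = \{\gamma_{b,c} : (b,c) \in B \times C\}$ (an injective parametrisation, as we verify momentarily), the incidence count therefore satisfies $I(P,\Gamma) \geq |A|\cdot|B|\cdot|C|$.

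The convexity of $f$ enters precisely in the claim that $\Gamma$ is a system of pseudo-lines: distinct curves meet in at most one point. If $b_1 \neq b_2$, then any common point requires $f(x-b_1) - f(x-b_2) = c_2 - c_1$, but the left-hand side has derivative $f'(x-b_1) - f'(x-b_2)$ of constant sign, since $f'$ is strictly monotone; hence at most one solution $x$ exists. If $b_1 = b_2$ and $c_1 \neq c_2$ the curves are disjoint, and this also shows the parametrisation $(b,c) \mapsto \gamma_{b,c}$ is injective, so $|\Gamma| = |B|\cdot|C|$. The Szemer\'edi--Trotter theorem for pseudo-lines now gives
\[
|A|\cdot|B|\cdot|C| \;\leq\; I(P,\Gamma) \;\ll\; |P|^{2/3}|\Gamma|^{2/3} + |P| + |\Gamma|.
\]
In the regime where the first term dominates, cubing and rearranging produces $(|A+B|\cdot|f(A)+C|)^2 \gg |A|^3|B||C|$, which is \eqref{ENRgen}. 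The two linear error terms correspond to degenerate cases in which $|A+B|\cdot|f(A)+C|$ is already so large that \eqref{ENRgen} holds trivially, and are dispatched by a routine case split.

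For the corollary, represent the convex set $A$ as $g([N])$ with $g$ strictly convex and $N = |A|$; then apply \eqref{ENRgen} with $[N]$ in the role of $A$, $B = [N]$, $C = A$, and $f = g$. Since $|[N]+[N]| \leq 2N$ and $g([N]) + A = A+A$, the inequality collapses to $N^2 |A+A|^2 \gg N^5$, giving $|A+A| \gg |A|^{3/2}$. The main conceptual step I expect to be the crux is the pseudo-line verification in the second paragraph: this is the only place where strict convexity of $f$ is actually used, and its strength is exactly what is needed to force any two translates of the graph to intersect at most once. Everything after that is a black-box application of incidence geometry.
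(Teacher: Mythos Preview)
The paper does not actually prove Theorem~\ref{ENR}; it is quoted as the result of Elekes, Nathanson and Ruzsa with a citation to \cite{ENR}, and then used as background. So there is no in-paper proof to compare against. That said, your argument is correct and is essentially the original one: translates of the graph of $f$ form a pseudo-line system by strict monotonicity of $f'$, each curve carries $|A|$ points of the grid $(A+B)\times(f(A)+C)$, and Szemer\'edi--Trotter for pseudo-lines yields \eqref{ENRgen}. The deduction of $|A+A|\gg|A|^{3/2}$ via $A=g([N])$ is also fine.
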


The current record is due to Shkredov \cite{Shk},  building up on work of Schoen and Shkredov from \cite{SS}. If $A$ is convex, then
\[|A +A| \gg \frac{|A|^{58/37}}{(\log |A|)^{20/37}},\qquad |A -A| \gg \frac{|A|^{8/5}}
{\log^{2/5}|A|} \,.\]
The right-hand side of both inequalities  has been improved by Olmezov \cite{Ol} to $|A|^{5/3}$ under additional assumptions on the signs of third and fourth derivatives of the convex function $f$, with $A=f([N])$. Olmezov develops on the method of Garaev \cite{G}, which is elementary: it relies only on basic combinatorics and order. The same is true of this paper and the bulk of \cite{RSSS}, from which we draw our inspiration. 

A related result, appearing in \cite{RSSS}, establishes the following theorem in a simple but remarkably effective manner.
\begin{Theorem}[Ruzsa, Shakan, Solymosi, Szemer\'edi]\label{RSSS}
For a convex $A\subset \mathbb R$, one has  \[|2A-A|\gg|A|^2.\]
\end{Theorem}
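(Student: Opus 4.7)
The plan is to exploit the fact that the consecutive differences $d_j := a_{j+1} - a_j$ of the (sorted) set $A = \{a_1 < a_2 < \cdots < a_N\}$ are strictly increasing in $j$ by convexity, and to exhibit a large injective family of elements of $2A - A$ by strategically placing translates of the differences between elements of $A$.

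Concretely, for each pair $(i,j)$ with $1 \le j < i \le N$ I form the element
\[
s_{i,j} := a_i + a_{j+1} - a_j \in 2A - A.
\]
The key observation is that $j < i$ forces $d_j < d_i$ by strict convexity, and hence
\[
a_i < s_{i,j} < a_i + d_i = a_{i+1}.
\]
Thus $s_{i,j}$ lies in the open interval $(a_i, a_{i+1})$, and these intervals are pairwise disjoint across different values of $i$. So I can compute $|2A-A|$ by summing contributions from each $i$ independently.

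For fixed $i$, the map $j \mapsto s_{i,j}$ is injective on $\{1,\dots,i-1\}$ precisely because $j \mapsto d_j$ is strictly increasing (again by convexity), so all $i-1$ values $s_{i,1},\dots,s_{i,i-1}$ are distinct. Combining over $i$, I obtain
\[
|2A - A| \;\ge\; \sum_{i=1}^{N} (i-1) \;=\; \binom{N}{2} \;\gg\; |A|^2.
\]

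There isn't really a serious obstacle here beyond picking the right parametrization; the whole argument hinges on the simple but crucial geometric fact that convexity makes each $d_j$ small enough (for $j < i$) to fit the shifted element $a_i + d_j$ strictly below $a_{i+1}$, automatically guaranteeing disjointness across $i$ and injectivity within each $i$. This is why the theorem admits such a short proof compared to the bounds for $|A+A|$, where no analogous localization is available.
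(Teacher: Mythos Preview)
Your proof is correct and is essentially the same as the paper's: both rest on the observation that for $j<i$ convexity gives $a_i < a_i + (a_{j+1}-a_j) < a_{i+1}$, so these elements of $2A-A$ fall into disjoint intervals indexed by $i$ and are distinct within each interval. The only cosmetic difference is that you sum over all pairs $j<i$ to get $\binom{N}{2}$, whereas the paper restricts to $j<N/2<i$ (a choice made to align with its later inductive argument), yielding the same $\gg |A|^2$ bound.
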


In this article we develop upon the elementary considerations underlying the above result. In a sense, Theorem \ref{RSSS} cannot be improved: take $A=\{n^2:1\leq n\leq N\}$. However, if one had a set $A$ which was ``more" convex than the first $N$ perfect squares one might hope for more, and indeed this is the first generalisation we  establish. In what follows, it will be easier to think of $A$ as a sequence, in increasing order. We say $A=\{a_1<\ldots<a_N\}$ is $1$-convex if it is convex, and inductively we say $A$ is $k$-convex if its sequence of first differences $\{a_{i+1}-a_i\}$ are $(k-1)$-convex. Thus, $\{n^2:1\leq n\leq N\}$ is $1$-convex, while $\{n^{3}:1\leq n\leq N\}$ is 2-convex, and $\{2^n:1\leq n\leq N\}$ is $k$-convex for any $k \leq N-1$.

The foundation of our paper is the following  generalisation of Theorem \ref{RSSS}.

\begin{Theorem}\label{BasicHigherConvex}
Let $k\geq 2$ be an integer and let $A$ be a $k$-convex sequence. Then
\[|2^{k}A-(2^{k}-1)A|\gg \frac{|A|^{k+1}}{2^{k^2}}.\]
\end{Theorem}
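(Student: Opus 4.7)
I will prove Theorem \ref{BasicHigherConvex} by induction on $k$, with the base case $k=1$ being precisely Theorem \ref{RSSS} (since $2^1 A - (2^1 - 1)A = 2A - A$ and $|A|^2/2$ matches the RSSS bound up to a constant).

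For the inductive step, assume the result for $k-1$ and let $A = \{a_1 < a_2 < \cdots < a_N\}$ be $k$-convex. Set $B = \{a_{i+1} - a_i : 1 \leq i \leq N-1\}$. By the definition of $k$-convexity, $B$ is a $(k-1)$-convex sequence of size $N-1$, so the inductive hypothesis applied to $B$ yields
\[
|2^{k-1}B - (2^{k-1} - 1)B| \gg \frac{(N-1)^k}{2^{(k-1)^2}}.
\]
The next step is to transfer this bound from $B$ to $A$, picking up an additional factor of $|A|$. For this I introduce the natural map
\[
\Psi\colon \bigl(2^{k-1}B - (2^{k-1}-1)B\bigr) \times A \;\longrightarrow\; 2^k A - (2^k-1)A, \qquad \Psi(c,a) = c + a.
\]
To check that the image really lies in $2^k A - (2^k-1)A$, write any $c$ as a signed sum of $2^k - 1$ first differences $a_{i_s+1} - a_{i_s}$; substituting, one finds $2^k-1$ positive and $2^k - 1$ negative $A$-summands, and adding $a$ promotes the positive count to $2^k$. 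Granting for the moment that $\Psi$ has at most $2^{2k-1}$ preimages per image point, the inductive bound combines with $|A|$ to give
\[
|2^k A - (2^k-1)A| \;\gg\; \frac{|A|\cdot(N-1)^k}{2^{(k-1)^2}\cdot 2^{2k-1}} \;=\; \frac{|A|^{k+1}}{2^{k^2}}\bigl(1-o(1)\bigr),
\]
which closes the induction since $(k-1)^2 + (2k-1) = k^2$.

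The technical heart of the argument, and what I expect to be the main obstacle, is establishing this near-injectivity of $\Psi$. A collision $\Psi(c,a)=\Psi(c',a')$ is equivalent to $c - c' = a' - a$, so one must control how often an element of $(2^k-1)B - (2^k-1)B$ can coincide with an element of $A - A$ — equivalently, how many ways $a' - a \in A-A$ can be written as a signed sum of $2(2^k-1)$ first differences of $A$. My approach would be to fix a canonical (say, lexicographically sorted) choice of the index tuple representing each $c$ — and in particular to require the indices appearing in $c$ to be disjoint from the one used to select $a$ — and then to invoke the strict monotonicity of the $(k-1)$-th differences of $A$, which is precisely the content of $k$-convexity, to invert the signed-sum representation modulo a multiplicative loss of at most $2^{O(k)}$. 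The spirit of the argument is the elementary order-based combinatorics of \cite{RSSS}, applied one level up at the $(k-1)$-convex first-difference set $B$, where the convexity hierarchy is exactly what licenses the repeated inversion.
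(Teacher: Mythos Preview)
Your inductive setup is natural, but the heart of your argument --- the claimed $2^{2k-1}$-to-one bound for the map $\Psi(c,a)=c+a$ --- is not established, and I do not see how your sketch would yield it. A collision $\Psi(c,a)=\Psi(c',a')$ means $c-c'=a'-a$, and elements of $A-A$ are themselves telescoping sums of consecutive differences $b_j\in B$; so for any fixed $c$ and any short gap $a'-a=\sum_{j=m}^{m+r-1}b_j$ with $r\le 2^{k}-1$, one can absorb those $b_j$'s into $c$ to produce a new $c'\in (2^{k-1})B-(2^{k-1}-1)B$ with $c-c'=a'-a$. This mechanism generates $\Omega(N)$ collisions per image point in general, not $O_k(1)$. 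Invoking ``monotonicity of the $(k-1)$-th differences'' does not block this, because the collisions arise from the additive identity $a_{m+r}-a_m=\sum b_j$, not from any failure of convexity. Your proposed ``canonical index tuple'' and ``disjoint indices'' restrictions are not specified precisely enough to rule this out, and as stated they do not.

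The paper sidesteps collision-counting entirely. It strengthens the inductive statement to assert that the $\gg |A|^{k+1}/2^{k^2}$ elements produced all lie in the open interval $(\min A,\max A)$, and then it \emph{splits} both $A$ and the difference sequence in half: take $(\Delta A)'$ to be the smaller half of the consecutive differences and restrict $a_i$ to the upper half of $A$. Convexity then guarantees $\max(\Delta A')<a_{i+1}-a_i$, so the entire translated set $a_i + \bigl(2^{k-1}(\Delta A)' - (2^{k-1}-1)(\Delta A)'\bigr)$, which by the strengthened induction hypothesis lies inside $a_i+(\min(\Delta A)',\max(\Delta A)')$, is trapped in the gap $(a_i,a_{i+1})$. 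Distinct $i$ give disjoint intervals, so distinctness is automatic and no multiplicity estimate is needed. The halving (which you omit) and the strengthened hypothesis (which you also omit) are exactly what make the induction close.
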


We note here that a small modification of the forthcoming proof of Theorem \ref{BasicHigherConvex} gives a better dependence on the constant $k$, replacing the $2^{k^2}$ in the denominator with $k!$. However, we present this slightly weaker bound in order to align its proof with that of Theorem \ref{MainHigherConvex}, our main result.

The form of the result of Elekes, Nathanson and Ruzsa stated in \eqref{ENRgen} may be viewed as a generalisation of the sum-product phenomenon. It implies, for instance, that at least one of $|A+A|$ and $|f(A)+f(A)|$ must be large. Indeed, Theorem \ref{ENR} implies that if $A$ is a finite set of real numbers satisfying $|A+A|\leq K|A|$, and $f$ is a convex function, then
\[|f(A)+f(A)|\gg \frac{|A|^{3/2}}{K}.\]

Our main theorem extends Theorem \ref{BasicHigherConvex} into a statement in the spirit of this inequality. For an interval $I$, we say $f: I \rightarrow \mathbb R$ is $0$-convex function if it is strictly monotone on $I$, and in general, $f$ is $k$-convex on $I$ if each of the derivatives $f^{(1)},f^{(2)},\ldots,f^{(k+1)}$ exists and is  non-vanishing on $I$ (hence all but the last one are strictly monotone).
\begin{Theorem}\label{MainHigherConvex}
Let $A$ be a finite set of real numbers contained in an interval $I$ and let $f$ be function which is $k$-convex on $I$ for some $k\geq 1$. Suppose that $|A| >10k$. Then if $|A+A-A|\leq K|A|$, we have
\[
|2^kf(A)-(2^{k}-1)f(A)|\geq \frac{|A|^{k+1}}{(CK)^{2^{k+1}-k-2}(\log |A|)^{2^{k+2}-k-4}},\]
for some absolute constant $C>0$.
\end{Theorem}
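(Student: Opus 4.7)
The plan is to mirror the inductive skeleton of the proof of Theorem \ref{BasicHigherConvex}, but with a crucial substitution: where that proof uses the strict monotonicity of iterated differences of a $k$-convex sequence to argue distinctness of parametrized sums, here I would use the $k$-convexity of $f$ together with the small-tripling hypothesis $|A+A-A| \leq K|A|$ to manufacture an approximate analogue. Concretely, order $A=\{a_1<\ldots<a_n\}$ and set $D_j = f(a_{j+1})-f(a_j)$. By the mean value theorem, $D_j = f'(\xi_j)(a_{j+1}-a_j)$ for some $\xi_j$ between consecutive elements of $A$. A popular common gap argument (extract, via Pl\"unnecke--Ruzsa applied to the small-tripling hypothesis, a value $d \in A-A$ with $r_{A-A}(d) \gg |A|/K^{O(1)}$) lets me pass to a large sub-configuration of pairs with $a_{j+1}-a_j = d$, so that the first-order differences behave like $f'(\xi_j)\cdot d$, a sequence whose higher convexity is now governed by the $(k-1)$-convexity of $f'$.

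With this tool in hand, the proof proceeds by induction on $k$. The base case $k=1$ amounts to bounding $|2f(A)-f(A)| \gg |A|^2/(CK\log|A|)^{O(1)}$ under small tripling, which is a direct adaptation of the RSSS argument behind Theorem \ref{RSSS}: one counts distinct elements of the form $f(a_i) + f(a_{j+1}) - f(a_j)$ as $(i,j)$ ranges over pairs with $a_{j+1}-a_j$ equal to the popular gap $d$, and the injectivity of this parametrisation (after fixing the gap) follows from the strict monotonicity of both $f$ and $f'$. For the inductive step I would represent elements of $2^k f(A) - (2^k-1)f(A)$ as
\[
f(a_{i_0}) + \sum_{\ell=1}^{k} \Delta^{(\ell)}_{i_\ell},
\]
where $\Delta^{(\ell)}_{i_\ell}$ is an $\ell$-th order finite difference of $f$ at consecutive elements of $A$; this sum lives in the correct sumset because the $\ell$-th order difference carries $2^{\ell-1}$ positive and $2^{\ell-1}$ negative contributions from $f(A)$, so the total is $1 + \sum_{\ell=1}^{k} 2^{\ell-1} = 2^k$ positives and $\sum_{\ell=1}^{k} 2^{\ell-1} = 2^k-1$ negatives. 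Distinctness is then established by peeling off the highest-order difference first and applying the $(k-1)$-convex inductive hypothesis to $f'$ restricted to a suitable popular sub-configuration.

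The principal obstacle will be tracking the combined $K$-loss and $\log$-loss through the recursion. Each dyadic pigeonholing to a popular gap, and each application of Pl\"unnecke--Ruzsa to propagate the small-tripling hypothesis to a new set built out of differences, costs a factor of the form $K^{O(1)}\log|A|$, and the recursion roughly doubles the accumulated exponents at each step of $k$: if $c(k)$ is the $K$-exponent and $d(k)$ the $\log$-exponent then I would expect recurrences of the shape $c(k+1) = 2c(k) + O(k)$ and $d(k+1) = 2d(k) + O(k)$, which telescope to $c(k) = 2^{k+1}-k-2$ and $d(k) = 2^{k+2}-k-4$ starting from $c(1)=1$, $d(1)=3$. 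A secondary subtlety is ensuring that, after passing to a subset with a common $\ell$-th order popular gap for each $\ell \leq k$, the resulting sub-configuration is still large enough to apply the inductive hypothesis; this requires choosing the popularity thresholds carefully so that the multiplicative losses at the $k$ stages compose to give exactly the stated bound, and verifying that the intermediate "derived" sets (the popular-gap subsets of $f'(A)$ and its analogues) inherit small tripling from $A$ via Pl\"unnecke.
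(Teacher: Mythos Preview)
Your inductive architecture and bookkeeping are essentially the paper's: squeeze sums into disjoint subintervals of $(\min f(A),\max f(A))$, apply the $(k-1)$-step to a finite difference of $f$, and track the compounded $K$- and $\log$-losses (your recurrences and initial values $c(1)=1$, $d(1)=3$ are exactly right). Where you diverge from the paper is in the mechanism for producing the ``popular gap'' structure, and this is where the proposal has a genuine gap.

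You propose to extract, via Pl\"unnecke--Ruzsa, a difference $d$ with $r_{A-A}(d)\gg |A|/K^{O(1)}$ and then ``pass to a large sub-configuration of pairs with $a_{j+1}-a_j=d$''. These are two different things: a popular element of $A-A$ need not occur often (or at all) as a \emph{consecutive} gap of $A$, and it is precisely consecutiveness that makes the squeezing intervals $(f(a_i),f(a_{i+1}))$ pairwise disjoint. If $d$ is merely a popular difference and $B=\{a\in A:a+d\in A\}$, the intervals $(f(b),f(b+d))$ for $b\in B$ can overlap badly, and the count collapses. The paper does \emph{not} use Pl\"unnecke here; instead it proves a dedicated dyadic pigeonhole lemma (a quantitative form of Theorem~\ref{RSSS}) which, from the hypothesis $|A+A-A|\leq K|A|$ alone, produces a family $H'\subseteq H(A)$ of $m$ consecutive-gap values and an integer $L$ with $Lm\gg |A|/\log|A|$, $Lm^2\ll K|A|$, and $|A_h|\asymp L$ for each $h\in H'$. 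Note also that the argument then \emph{sums over all $h\in H'$}, not over a single popular gap; the factor $m$ is essential in recovering $Lm\gg |A|/\log|A|$.

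Two smaller corrections. First, the inductive hypothesis is applied not to $f'$ but to the finite difference $g=\Delta_h f$, which is genuinely $(k-1)$-convex (this is Lemma~\ref{Convex}); the mean-value heuristic $\Delta_h f\approx f'\cdot h$ is only motivation. Second, no Pl\"unnecke is needed to ``propagate small tripling'': the set you induct on is $A_h'\subseteq A$, so $|A_h'+A_h'-A_h'|\leq |A+A-A|\leq K|A|$ trivially, and one converts this to a tripling bound for $A_h'$ using $|A_h'|\gg L$ together with the relations $Lm\gg |A|/\log|A|$ and $m\ll K\log|A|$ from the dyadic lemma (yielding the new constant $K'\ll K^2\log^2|A|$). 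Once you replace your Pl\"unnecke step by this dyadic lemma on consecutive gaps, the rest of your sketch lines up with the paper's proof.
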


By applying Theorem \ref{MainHigherConvex} to various choices of $f$, we obtain theorems about iterated growth in the spirit of the renown Erd\H{o}s-Szemer\'edi sum-product problem. Finite sets $A\subset \RR$ defining few distinct products are known to yield arbitrarily large iterated sumsets for sufficiently many iterations: this constitutes the so-called FPMS (few products, many sums) phenomenon. See e.g. \cite{CH}, \cite{Li}, \cite{BuCr}, \cite{Sh} and the references contained therein. Among these, the results by Shkredov \cite{Sh} apply to non-ordered fields as well, for they are based on geometric incidence estimates which hold true over general fields. In contrast, we make a point of avoiding geometric incidence theory here but use order in an essential way, as is the case in \cite{BuCr}, \cite{CH}, and \cite{Li}. An application of our Theorem \ref{MainHigherConvex} gives the iterated sums growth comparable to \cite{CH} and \cite{Li} but is weaker than \cite{BuCr}. However, the above-mentioned papers rely quite heavily on the Distributive Law, and their results are therefore specific to the FPMS setup. 

Theorem \ref{MainHigherConvex}, however, is sufficiently general so as to produce bounds for other sum-product type iterated growth questions that have not, to our knowledge, been established before. \begin{Corollary}\label{Applications} For all $k\geq 1$ and $\eps>0$, there exists $\delta=\delta(k,\eps)>0$ such that, for any sufficiently large finite set of positive real numbers $A$, the following hold:
\begin{enumerate}
    \item If $|A+A|\leq |A|^{1+\delta}$, then we have \[|(A\stackrel{2^{k-1}}{\cdots}A)/(A\stackrel{{2^{k-1}-1}}{\cdots}A)|\gg |A|^{k-\eps}.\]
    \item If $|AA|\leq |A|^{1+\delta}$, then we have
    \[|((A+1)\stackrel{2^{k-1}}{\cdots}(A+1))/((A+1)\stackrel{{2^{k-1}-1}}{\cdots}(A+1))|\gg |A|^{k-\eps}.\]
    \item If $|A+A|\leq |A|^{1+\delta}$ and $A^k=\{a^k:a\in A\}$ for some integer $k\geq 2$, then we have \[|2^{k-1} A^k-(2^{k-1}-1)A^k|\gg |A|^{k-\eps}.\]
    The same is true if $a^k$ is replaced by any polynomial in $a$ of degree $k$, or any real (positive or negative) power of $a$ distinct from $0,\ldots,k-1$, provided it is well-defined.
\end{enumerate}
\end{Corollary}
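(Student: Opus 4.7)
The plan is to derive each part from Theorem \ref{MainHigherConvex} (applied with $k$ replaced by $k-1$) after choosing a suitable function $f$, using the Plünnecke--Ruzsa inequality to turn the given additive or multiplicative doubling hypothesis into a bound of the shape $|B+B-B|\le K|B|$ with $K\le|A|^{O(\delta)}$, and, where necessary, pigeonholing $A$ onto an interval on which $f$ is $(k-1)$-convex.

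For part (1), take $f=\log$ on $(0,\infty)$. Since $(\log x)^{(j)}=(-1)^{j-1}(j-1)!\,x^{-j}$ never vanishes for $x>0$, $\log$ is $(k-1)$-convex there. The hypothesis $|A+A|\le|A|^{1+\delta}$ and Plünnecke--Ruzsa give $|A+A-A|\le|A|^{1+3\delta}$, so Theorem \ref{MainHigherConvex} produces
\[
|2^{k-1}\log A-(2^{k-1}-1)\log A|\ge \frac{|A|^{k}}{|A|^{O_k(\delta)}(\log|A|)^{O_k(1)}},
\]
and injectivity of $\log$ converts the left-hand side into the iterated ratio of products in (1). Part (3) is the same argument with $f(x)=x^k$, whose first $k$ derivatives $k(k-1)\cdots(k-j+1)x^{k-j}$ are non-vanishing on $(0,\infty)$; the real-power extension $f(x)=x^\alpha$ with $\alpha\notin\{0,\ldots,k-1\}$ is identical, since none of the coefficients $\alpha(\alpha-1)\cdots(\alpha-j+1)$ vanish for $j=1,\ldots,k$.

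Part (2) and the polynomial version of (3) require a preliminary pigeonhole because the relevant $f$ has derivatives with isolated zeros. For a degree-$k$ polynomial, $f^{(j)}$ has at most $k-j$ real zeros, so finitely many in total. For part (2), set $B=\log A$ (so that $|AA|\le|A|^{1+\delta}$ becomes $|B+B|\le|B|^{1+\delta}$) and take $f(x)=\log(1+e^x)$, so that $f(\log a)=\log(a+1)$; here $f^{(j)}$ is a polynomial in the sigmoid $\sigma(x)=1/(1+e^{-x})$ and hence has only $O_k(1)$ zeros. In each situation the real line breaks into $O_k(1)$ open intervals on which $f$ is $(k-1)$-convex, and pigeonholing yields a subset $A'\subseteq A$ (or $B'\subseteq B$) of size $\gg_k|A|$ lying in one of them. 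The trivial inclusion $B'+B'-B'\subseteq B+B-B$ transfers the Plünnecke bound to $B'$ at the cost of only an $O_k(1)$ factor in $K$, and the inclusions $A'\subseteq A$ and $A'+1\subseteq A+1$ convert the bound obtained for the subset into the stated bound for $A$.

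The main technical matter, more bookkeeping than a genuine obstacle, is choosing $\delta=\delta(k,\eps)$ small enough that the factor $(CK)^{2^k-k-1}=|A|^{O_k(\delta)}$ from Theorem \ref{MainHigherConvex} is absorbed by the $|A|^{\eps}$ slack; taking $\delta\le c_k\eps$ for a small enough $c_k>0$ suffices, while the $(\log|A|)^{O_k(1)}$ factor is harmless since $|A|$ is assumed sufficiently large in terms of $k$ and $\eps$.
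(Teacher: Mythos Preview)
Your proposal is correct and follows essentially the same approach as the paper: apply Theorem~\ref{MainHigherConvex} (with $k$ replaced by $k-1$) using $f=\log$ for (1), $f(x)=\log(1+e^x)$ after substituting $B=\log A$ for (2), and $f(x)=x^k$ (or the relevant power/polynomial) for (3), each time invoking Pl\"unnecke--Ruzsa to bound $|B+B-B|$ and then choosing $\delta$ small enough in terms of $k,\eps$.

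One point where you are actually more careful than the paper: in part~(2) the paper asserts that $f(x)=\log(1+e^x)$ is $k$-convex for every $k$, but in fact $f'''(x)=\sigma(x)(1-\sigma(x))(1-2\sigma(x))$ vanishes at $x=0$ (where $\sigma=f'$ is the sigmoid), and higher derivatives have further isolated zeros. Your pigeonholing onto an interval avoiding the finitely many zeros of $f',\ldots,f^{(k)}$ is the right fix, and matches what the paper does explicitly in the polynomial case of part~(3).
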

\noindent Further applications in light of some questions existing in the literature are left to Section \ref{sec:applications}.

We conclude this introduction with a few remarks. First, Theorem \ref{MainHigherConvex} highlights that it is not only exponentiation that disrupts additive structure, which is the basis for the sum-product problem, but in fact any non-linear function which does not oscillate too wildly on the interval containing $A$. Indeed, if $f$ is sufficiently differentiable and any of its first $k+1$ derivatives has a controlled number of zeroes, then, by the pigeonhole principle, one can pass to a reasonably large subset of $A$ which avoids them all.  

Second, while the dependence on $k$ and $K$ in Theorem \ref{MainHigherConvex} is hardly optimal, the theorem tells us that the condition that $f$ be $k$-convex is necessary and sufficient in order to produce a lower bound of the form $|A|^{k+1}$. Indeed, the $k$'th powers $\{n^k:1\leq n\leq N\}$ can be seen to grow like $N^k$ under iterated sums and differences, but not beyond. On the other hand, incidence geometry techniques for convex functions, initiated in \cite{ENR} and based on the Szemer\'edi-Trotter theorem, seem not to distinguish between convexity and higher convexity. 

Finally, we point out that Theorem \ref{MainHigherConvex} gives new information in the context of the $3$-fold sum-product problem. The resulting inequality
\begin{equation} \label{3fold}
\max \{|A+A-A|, |AA/A| \} \gg \frac{|A|^{3/2}}{(\log |A|)^{3/2}}
\end{equation}
appears to be stronger than the previously known best $3$-fold sum-product  bounds.

\section{Proof of Theorem \ref{BasicHigherConvex}} \label{sec:basic}

The proof is a natural extension of that of Theorem \ref{RSSS} which we will recall. To proceed we introduce some notation. Let $\Delta a_{i}=a_{i+1}-a_i$, so $\Delta$ is the forward difference operator. In this notation, Theorem \ref{RSSS} follows from the observation that
\begin{equation}\label{squeeze0}
    a_i<a_i+\Delta a_j< a_{i+1}
\end{equation}
for $j<i$. The latter condition is guaranteed by insisting that $j<N/2<i$, which is the condition we will adopt in our proof.
\begin{proof}[Proof of Theorem \ref{BasicHigherConvex}]
At the expense of a constant factor, we may assume that $|A|=2^{l}-1$, for some $l$, by truncating $A$ if necessary. 
We proceed by induction on $k$. We will in fact show that the set $2^kA-(2^k-1)A$ contains at least $|A|^{k+1}2^{-k^2}$ elements in the interval $(\min(A), \max(A))$, this stronger statement being better suited to induction. The base case of this induction is Theorem \ref{RSSS}, and follows immediately from inequality (\ref{squeeze0}).

For the inductive step, we let \[(\Delta A)'=\{a_{2}-a_1<\ldots<a_{2^{l-1}}-a_{2^{l-1}-1}\}\] 
and 
\[(\Delta A)''=\{a_{2^{l-1}+1}-a_{2^{l-1}}<\ldots<a_{2^{l}-1}-a_{2^{l}-2}\}.\] 
For $i\geq 2^{l-1}$, the set $a_i+(\Delta A)'$ is a subset of the interval $(a_i,a_{i+1})$ because \[\max((\Delta A)')<\min((\Delta A)'')\leq a_{i+1}-a_i.\] By induction $2^{k-1}(\Delta A)'-(2^{k-1}-1)(\Delta A)'$ contains at least $(2^{l-1}-1)^{k}2^{-(k-1)^2}$ elements in the interval $(a_2-a_1,a_{2^{l-1}}-a_{2^{l-1}-1})$. Thus we can find $(2^{l-1}-1)^k2^{-(k-1)^2}$ distinct elements in the interval $(a_{i},a_{i+1}]$ from the set $a_i+2^{k-1}(\Delta A)'-(2^{k-1}-1)(\Delta A)'$. Since $(\Delta A)'\subseteq A-A$, we have produced a total of \[(2^{l-1}-1)(2^{l-1}-1)^k2^{-(k-1)^2}\geq |A|^{k+1}2^{-k^2}\] distinct elements from the set $2^kA-(2^k-1)A$.
\end{proof}

\section{Proof of Theorem \ref{MainHigherConvex}} \label{sec:main}

To motivate the proof of Theorem \ref{MainHigherConvex}, which is a generalisation of the proof of Theorem \ref{BasicHigherConvex}, it is worth generalising Theorem \ref{RSSS} first. Let us assume for the moment that both $f$ and $f'$ are increasing.

A convex sequence is obtained by applying a convex function $f$ to the interval $[N]=\{1,\ldots,N\}$ to get $a_{i}=f(i)$. In this way, the inequalities 
\[a_i<a_i+a_{j+1}-a_j\leq a_{i+1}\]
can be rewritten as
\begin{equation}\label{squeeze1}
    f(i)<f(i)+f(j+1)-f(j)<f(i+1).
\end{equation}
The significance of adding 1 to the arguments of $f$ is that 1 is a common consecutive difference of $[N]$. The quantity $f(j+1)-f(j)$ is suggestive of a derivative of $f$. Indeed, if $f'$ is increasing then so is \[(\Delta_h f)(x):=f(x+h)-f(x)=\int_0^{h}f'(x+t)dt\]
for any fixed $h>0$.

Now the proof Theorem \ref{RSSS} allows us to deduce that if $A=\{a_1<\ldots<a_N\}$ has many distinct consecutive differences $a_{i+1}-a_i$, then $|A+A-A|$ is large. We can therefore use the bound $|A+A-A|\leq K|A|$, along with a dyadic pigeonhole argument, to deduce that many consecutive differences of $A$ are oft-repeated. Once such a repeated difference, say $h$, is obtained, we can use that $\Delta_h f$ is increasing to generate many inequalities of the form (\ref{squeeze1}). 

Throughout we shall write \[H(A)=\{a_{j+1}-a_j:j=1,\ldots,N-1\}\]
for the set of consecutive differences of $A$, and for $h\in H(A)$, we write \[A_h=\{a_i:a_{i+1}-a_i=h\}.\]
With this notation in hand, if $f'$ is increasing and $a_i,a_j\in A_h$ for some $j<i$ then 
\begin{equation}\label{squeeze2}
    f(a_i)<f(a_i)+f(a_j+h)-f(a_j)\leq f(a_{i}+h),
\end{equation}
generalising (\ref{squeeze1}). If $f$ were decreasing, we would instead have
\begin{equation}\label{squeeze3}
    f(a_i+h)<f(a_i)-f(a_j+h)+f(a_j)\leq f(a_{i}),
\end{equation}
and if $\Delta_h f$ were decreasing in either case, we would insist instead that $j>i$.

We will need the following dyadic pigeonhole lemma, which is a quantitative version of Theorem \ref{RSSS}.
\begin{Lemma}\label{Dyadic}
Let $A=\{a_1<\ldots<a_N\}$ be a set of reals and suppose $N$ is sufficiently large. There is a set $H'\subseteq H(A)$ of size $m$ and an integer $L$ satisfying
\begin{enumerate}[label=(\roman*)]
    \item $Lm\geq N/(3\log_2N)$,
    \item $ |A+A-A| \geq (Lm^2)/2$, and
    \item for each $h\in H'$, $L\leq |A_h|\leq 2L$.
    
\end{enumerate}
\end{Lemma}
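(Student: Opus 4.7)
The plan is to establish (i) and (iii) by a dyadic pigeonhole argument on the fiber sizes $|A_h|$, and then prove (ii) by adapting the squeeze-into-intervals trick behind Theorem \ref{RSSS} (i.e.\ inequality (\ref{squeeze0})) but applied only to differences coming from the pigeonholed set $H'$.

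First, I would observe that the sets $\{A_h\}_{h \in H(A)}$ partition $\{a_1, \ldots, a_{N-1}\}$, since every index $i \leq N-1$ belongs to exactly one $A_h$ (namely $h = a_{i+1}-a_i$); hence $\sum_{h \in H(A)} |A_h| = N-1$. Partition $H(A)$ into dyadic classes $H_k = \{h \in H(A) : 2^k \leq |A_h| < 2^{k+1}\}$ for $k = 0, 1, \ldots$; there are at most $\lceil \log_2 N\rceil$ nonempty classes, so pigeonhole produces one, say $H_k$, with $\sum_{h \in H_k} |A_h| \gg N/\log N$. Setting $L = 2^k$, $H' = H_k$, and $m = |H'|$ yields (iii) immediately, and combining $L \leq |A_h| \leq 2L$ with the pigeonhole lower bound gives $Lm \gg N/\log N$; tracking the constants carefully (and using $N$ sufficiently large to absorb the $+1$ in $\lceil\log_2 N\rceil$) sharpens this to $Lm \geq N/(3 \log_2 N)$, which is (i).

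For (ii), I would order $H' = \{h_1 < \ldots < h_m\}$ and, for each $s \in \{1,\ldots,m\}$, each $a_i \in A_{h_s}$, and each $t \leq s$, pick any $a_j \in A_{h_t}$ (nonempty since $h_t \in H'$) and form
\[a_i + h_t = a_i + a_{j+1} - a_j \in A+A-A.\]
Since $h_t \leq h_s = a_{i+1} - a_i$, this element lies in the half-open interval $(a_i, a_{i+1}]$. These intervals are disjoint across distinct $i$, and within a fixed $a_i \in A_{h_s}$ the values $a_i + h_1, \ldots, a_i + h_s$ are themselves distinct, so $a_i$ contributes $s$ new elements to $A+A-A$. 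Summing,
\[|A+A-A| \geq \sum_{s=1}^{m} |A_{h_s}|\cdot s \geq L\sum_{s=1}^m s = \frac{Lm(m+1)}{2} \geq \frac{Lm^2}{2},\]
which gives (ii).

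I do not expect any genuine obstacle: the whole argument is a quantitative repackaging of the proof of Theorem \ref{RSSS}, with the pigeonholed scale $H'$ playing the role of the full set of consecutive differences. The only mild delicacy is verifying that the constant in (i) really comes out $1/(3\log_2 N)$ after both the dyadic-loss in selecting $H_k$ and the factor of $2$ loss from $Lm \geq \frac{1}{2}\sum_{h \in H'}|A_h|$; this is what forces the hypothesis that $N$ be sufficiently large.
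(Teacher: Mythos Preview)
Your proof is correct and essentially identical to the paper's: both carry out a dyadic pigeonhole on the fiber sizes $|A_h|$ to obtain (i) and (iii), and both establish (ii) by squeezing the translates $a_i+h_t$ (for $h_t\le a_{i+1}-a_i$, $h_t\in H'$) into the disjoint intervals $(a_i,a_{i+1}]$ and summing $\sum_{s=1}^m s\,|A_{h_s}|\ge L\binom{m+1}{2}$. The only cosmetic difference is that the paper introduces an index function $i(j)$ (the rank of $a_{j+1}-a_j$ within $H(A)$) to organise the counting before restricting to the chosen dyadic class.
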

\begin{proof}
Let \[i(j)=1+|\{h\in H(A):h<a_{j+1}-a_j\}|.\] In other words, if $H(A)=\{h_1<\ldots<h_k\}$ is ordered, $i(a_{j+1}-a_j)=l$ if $a_{j+1}-a_j=h_l$ (so $i$ is the index). 

Moreover, since \[a_j\leq a_j+h<a_{j+1}\]
for $h\in H\cup\{0\}$ with $h<a_{j+1}-a_j$, it follows that there are $i(j)$ elements of $A+A-A$ in the interval $[a_j,a_{j+1})$, whence
\[|A+A-A|\geq \sum_{j=1}^{N-1}i(j)=\sum_{l=1}^k l|i^{-1}(l)|.\]
For $0\leq t \leq \log N$, let \[I_t=\{l:2^t\leq |i^{-1}(l)|< 2^{t+1}\}\]
Then
\[N-1=\sum_{l}|i^{-1}(l)|\leq 2\sum_{t=0}^{\log_2 N}2^t|I_t|,\]
so for some $t$ we have \[|I_t|2^t\geq \frac{N}{3\log_2 N}.\]
From this,
\[\sum_{l=1}^k l|i^{-1}(l)|\geq \sum_{l\in I_t}l2^t\geq 2^{t-1}|I_t|^2.\]
We let $H'=\{h_l:l\in I_t\}$ and $L=2^t$.
\end{proof}

We now prove Theorem \ref{MainHigherConvex} in the case $k=1$. This means showing that 
\[|2f(A)-f(A)|\gg \frac{|A|^2}{K(\log|A|)^3}.\]
Applying Lemma \ref{Dyadic}, we obtain a subset $H'\subset H(A)$ of size $m$ and an integer $L$. For $h\in H'$ and for $a_i,a_j\in A_h$ we can apply (\ref{squeeze2}), provided $j<i$ and $f'$ is increasing. This gives us $L^2/2$ different sums from $2f(A)-f(A)$ in the intervals $(f(a_i),f(a_i+h))$. These intervals are disjoint for distinct $h$ as $a_i+h=a_{i+1}$ and $f$ is increasing. Adding the contributions from different $h\in H'$, we have produced $mL^2/2$ distinct sums in $2f(A)-f(A)$. By (i) and (ii),
\[|2f(A)-f(A)|\gg \frac{(mL)^2}{m}\gg \frac{|A|^2}{K(\log |A|)^3}.\] In the case that $f$ or $f'$ is decreasing, we can use inequality (\ref{squeeze3}) or insist that $j>i$ as needed.

Before moving on to the general case of Theorem \ref{MainHigherConvex}, we need a bit of notation. Recall that for $h_1\in H(A)$ we write
\[A_{h_1}=\{a_i\in A: a_{i+1} -a_i = h_1\}.\] Inductively, we define
\[A_{h_1,\ldots,h_k}=(A_{h_1,\ldots,h_{k-1}})_{h_k},\]
which means those elements $a\in A_{h_1,\ldots,h_{k-1}}$ whose successor in $A_{h_1,\ldots,h_{k-1}}$ is $a+h_k$. Similarly, we write
\[(\Delta_{h_1}f)(x)=f(x+h_1)-f(x)\]
and inductively define
\[(\Delta_{h_1,\ldots,h_k}f)(x)=(\Delta_{h_1,\ldots,h_{k-1}}f)(x+h_k)-(\Delta_{h_1,\ldots,h_{k-1}}f)(x).\]

\begin{Lemma}\label{Convex}
Let $f$ be a $k$-convex function on $[a,b]$. Then for any $h$ with $0<h<b-a$, $\Delta_{h}f$ is $(k-1)$-convex on $[a,b-h]$. In particular, if $j\leq k$ and $h_1,\ldots,h_j>0$ then $\Delta_{h_1,\ldots,h_j}f$ is strictly monotone. 
\end{Lemma}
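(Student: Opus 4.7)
The plan is to reduce the statement to an elementary calculation with derivatives. The key observation is that differentiation commutes with the difference operator $\Delta_h$: for any $m$ such that $f^{(m)}$ exists on $[a,b]$, one has
\[
(\Delta_h f)^{(m)}(x) = f^{(m)}(x+h) - f^{(m)}(x) = \int_x^{x+h} f^{(m+1)}(t)\,dt
\]
on $[a,b-h]$. This identity will be the workhorse of the argument.

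Now suppose $f$ is $k$-convex on $[a,b]$, so $f^{(1)},\ldots,f^{(k+1)}$ exist and are non-vanishing on $[a,b]$. Each $f^{(m)}$ with $m\leq k+1$ is the derivative of $f^{(m-1)}$, and therefore satisfies the intermediate value property by Darboux's theorem; combined with being non-vanishing, this forces $f^{(m)}$ to have constant sign on $[a,b]$. To show $\Delta_h f$ is $(k-1)$-convex on $[a,b-h]$, I must verify that $(\Delta_h f)^{(1)},\ldots,(\Delta_h f)^{(k)}$ are non-vanishing there. For $m=1,\ldots,k$, the derivative $f^{(m+1)}$ has constant sign on $[a,b]$, so the integral $\int_x^{x+h}f^{(m+1)}(t)\,dt$ is a nonzero quantity with that same sign for every $x\in[a,b-h]$. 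By the displayed identity this is exactly $(\Delta_h f)^{(m)}(x)$, which finishes the first assertion.

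The second assertion then follows by a short induction on $j\leq k$. Applying the first part once shows $\Delta_{h_1}f$ is $(k-1)$-convex; applying it again to $\Delta_{h_1}f$ (on the shortened interval) shows $\Delta_{h_1,h_2}f = \Delta_{h_2}(\Delta_{h_1}f)$ is $(k-2)$-convex, and so on, giving that $\Delta_{h_1,\ldots,h_j}f$ is $(k-j)$-convex on $[a,b-(h_1+\cdots+h_j)]$. If $j=k$ this means it is $0$-convex, i.e.\ strictly monotone by definition; if $j<k$, then $(k-j)$-convexity ensures the first derivative is non-vanishing and therefore of constant sign (again by Darboux), which still yields strict monotonicity.

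I do not expect any genuine obstacle: the only subtle point is that the definition of $k$-convexity only says $f^{(k+1)}$ is non-vanishing, not that it is continuous. This is precisely where I would invoke Darboux's theorem to extract a constant sign from the non-vanishing assumption, so that the integral representation above is guaranteed to be nonzero. Everything else is the interchange of differentiation with $\Delta_h$ and a one-line induction.
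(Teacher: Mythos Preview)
Your proof is correct and follows essentially the same route as the paper: both use the identity $(\Delta_h f)^{(m)}(x)=f^{(m)}(x+h)-f^{(m)}(x)=\int_0^h f^{(m+1)}(x+t)\,dt$, argue that the integrand has fixed sign, and then iterate to obtain that $\Delta_{h_1,\ldots,h_j}f$ is $(k-j)$-convex. Your explicit invocation of Darboux's theorem to pass from ``non-vanishing'' to ``constant sign'' for $f^{(k+1)}$ is a worthwhile technical clarification that the paper leaves implicit.
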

\begin{proof}
The first claim holds as \[(\Delta_h f)^{(l)}(x)=f^{(l)}(x+h)-f^{(l)}(x)=\int_0^h f^{(l+1)}(x+t)dt,\]
and if $l\leq k$ then the integrand on the right has fixed sign, and so the sign of the left hand side is fixed as well. It follows that $\Delta_{h_1,\ldots,h_j}f$ is $(k-j)$-convex, and since $k-j\geq 0$, this means its derivative is non-vanishing. 
\end{proof}

\begin{proof}[Proof of Theorem \ref{MainHigherConvex}]
We will prove the statement by induction on $k$, with the further conclusion that all sums produced lie in the interval $(\min(f(A)),\max(f(A)))$. We will assume that $f$ and $f'$ are both increasing, as the case where either of them is decreasing is handled similarly. The constant $C$ will be chosen so as to close the induction, and may change from line to line. We have already established the case $k=1$. So assume the statement holds for $k-1$ and any set $A$ with corresponding value of $K$.

Apply Lemma \ref{Dyadic} to obtain a set $H'\subset H(A)$ of size $m$ such that for each $h\in H'$ we have a set $A_h$ of size $N_h\geq L$. Now write
\[A_h=\{a_1<\ldots<a_{N_h}\}.\]
The function $\Delta_h f$ is increasing and so the values $\Delta_hf(a_i)$ are as well. Let 
\[A_h'=\{a_i:i\leq N_h/2\},\ A_h''=\{a_i:i\geq N_h/2\}.\]
Then we have the containment
\[f(a_i)+\Delta_hf(A_h')\subset (f(a_i),f(a_i+h))\]
for each $a_i\in A_h''$. But the left hand side is the image of $A_h'$ under the function $g=f(a_i)+\Delta_h f$. The function $g$ is $(k-1)$-convex on $(\min(A_h'),\max(A_h'))$ by Lemma \ref{Convex}. Now \[|A_h'+A_h'-A_h'|\leq |A+A-A|\leq K|A|\leq (CK^2(\log |A|)^2)|A_h'|,\]
by properties (i) and (ii) of Lemma \ref{Dyadic}. So by induction there are at least \[T=\frac{L^{k}}{(CK^2(\log|A|)^2)^{2^{k}-k-1}(\log|A|)^{2^{k+1}-k-3}}\]
distinct elements from \[2^{k-1}g(A_h)-(2^{k-1}-1)g(A_h)\subseteq 2^kf(A)-(2^{k}-1)f(A)\] in the interval $(f(a_i),f(a_i+h))$.
Now \[L\geq \frac{|A|}{Cm\log |A|}\geq\frac{|A|}{CK(\log |A|)^2}\]
so that 
\[T\geq \frac{|A|^k}{C^{2^k-1}K^{2^{k+1}-k-2}(\log|A|)^{2^{k+2}-k-5}}.\]
This induction now applies to each $a_i\in A_h''$, independently for each $h\in H'$, and the elements of $2^{k}f(A)-2^{k-1}f(A)$ constructed belong to disjoint sub-intervals of $(\min(A),\max(A)]$. The total number of elements constructed is
\[LmT\gg \frac{|A|}{C\log|A|}\cdot \frac{|A|^k}{C^{2^k-1}K^{2^{k+1}-k-2}(\log|A|)^{2^{k+2}-k-5}}=\frac{|A|^{k+1}}{C^{2^k}K^{2^{k+1}-k-2}(\log|A|)^{2^{k+2}-k-4}}\]
and the induction closes for appropriately large $C$.
\end{proof}

\section{Applications} \label{sec:applications}
Of particular relevance in this section is the following general form of the Erd\H{o}s-Szemer\'{e}di sum-product conjecture \cite{ES}:  for all $\eps >0$ and every integer $k \geq 2$ there exists a constant $c=c(k, \eps)>0$ such that for all $A \subset \mathbb N$
\[
\max \{ |kA|,|A^{(k)}| \} \geq c|A|^{k-\eps}.
\] 
In the above inequality, $A^{(k)}$ denotes the $k$-fold product set
\[
A^{(k)}=\{x_1\cdots x_k : x_1,\dots, x_k \in A \}.
\]
The conjecture is widely believed to hold in the more general setting of $A \subset \mathbb R$. The question is wide open even in the most studied the case of $k=2$. Little is known for larger values of $k$, with the exception of the milestone FPMS-paper by Bourgain and Chang \cite{BC} for sets of natural numbers, whose results were recently reproved in a beautiful way and somewhat strengthened by  P\'alv\"olgyi  and Zhelezov \cite{PZ}.

In order to make significant progress with these difficult problems, it is necessary at least to understand what happens in the extreme case when one of the sum set or product set is small. But even with this strong additional information, not everything is clear, especially over $\mathbb R$. There has been progress in proving that few products implies many iterated sums, and it is known that $|kA|$ grows arbitrarily large when $|AA| \leq K|A|$ and $K$ is small (in particular, effective results are known for $K=|A|^{\eps}$ for $\eps>0$ sufficiently small). The best result (over $\mathbb R$) in this direction is due to Bush and Croot \cite{BuCr}.

\subsection{Few sums implies many iterated products}

Our first application concerns product sets of $A$ when the sum set is small. For $k=2$ an optimal result
\[
|A+A| \leq K|A| \Rightarrow |AA| \geq |A|^{2-\eps}K^{-C}
\]
follows from a simple application of the Szemer\'{e}di-Trotter Theorem. See \cite{ER}, where $C=4$; it is possible to lower it down to $8/3$. It is not obvious how to iterate this to get bigger growth for more products, and even the result
\[
|A+A| \leq K|A| \Rightarrow |AAA| \geq |A|^{2+\eps}K^{-C}
\]
was proved only recently \cite{RNS}.

Theorem \ref{MainHigherConvex} implies the following result that \textit{few sums implies unbounded iterated products}, part (1) of Corollary \ref{Applications}.

\begin{proof}[Proof of Corollary \ref{Applications}, (1)]
Let $f(x)=\log (x)$ and note that $f$ and all of its derivatives are non-zero on $(0,\infty)$. Apply Theorem \ref{MainHigherConvex} with this function $f$, which is $k$-convex for any $k$.

By the Pl\"{u}nnecke inequality
\[
|A+A-A| \leq |A|^{1+3\delta}.
\] 
Theorem \ref{MainHigherConvex} therefore gives
\[
|2^kf(A)-(2^{k}-1)f(A)|\gg \frac{|A|^{k+1}}{(C|A|^{3\delta})^{2^{k+1}-k-2}(\log |A|)^{2^{k+2}-k-4}}.\]
By taking $\delta$ to be sufficiently small with respect to $\eps$ and $k$, the denominator above can be made smaller than $|A|^\eps$, provided $A$ is sufficiently large.
Since $f(A)=\log(A)$, sums of $f(A)$ correspond with products of $A$, and the statement follows.
\end{proof}

\subsection{Products and shifts}

An important variant of the sum-product phenomenon is the idea that additive perturbations destroy multiplicative structure. For instance, consider the following result of Shkredov \cite{Sh}: for all finite $A \subset \mathbb R$,
\begin{equation} \label{shifts}
|AA|^4|(A+1)(A+1)| \gg \frac{|A|^6}{\log |A|}.
\end{equation}
In particular, if $|AA| \leq K|A|$ it follows that the shifted product set $(A+1)(A+1)$ is close to maximal in size. Note that the value of the shift in \eqref{shifts} can be replaced by an arbitrary non-zero $\lambda$ by applying the inequality to $\lambda^{-1}\cdot A$.

Similar statements to \eqref{shifts} can be deduced from results on convexity and sum sets. For instance, let $X \subset \mathbb R$ and apply \eqref{ENRgen} with
\[A=\log(X)=B,\, C=\log(X+1),\, f(x)=\log(e^x+1)
\]
it follows that
\[
|AA|^2|(A+1)(A+1)|^2 \gg |A|^5.
\]

Theorem \ref{MainHigherConvex} gives similar statement about iterated growth, which is (2) of Corollary \ref{Applications}.
\begin{proof}[Proof of Corollary \ref{Applications}, (2)]
Let $B=\log A$, then using Pl\"unnecke's inequality, we have \[|B+B-B|\leq |B|^{1+3\delta}.\]
Let $f(x)=\log(1+e^x)$. Calculating derivatives of $f$ shows that $f$ is $k$-convex for any $k$. It therefore follows that
\[
|(A+1)^{(2^k)}/(A+1)^{(2^k-1)} | =|2^kf(B)-(2^{k}-1)f(B)|\geq \frac{|A|^{k+1}}{(C|A|^{3\delta})^{2^{k+1}-k-2}(\log |A|)^{2^{k+2}-k-4}}.
\]
By taking $\delta$ to be sufficiently small with respect to $\eps$ and $k$, the denominator is at most $|A|^{\epsilon}$, as required.
\end{proof}

We anticipate future applications of the ideas in this paper to give new results about growth of products under shifts and further consequences. We present one application here, which is an analogue of the main result from \cite{RNS}, and gives a positive answer to a question raised in \cite{RNW}.

\begin{Theorem}
Let $A$ be a set of positive reals and suppose that $|AA| \leq K|A|$. Then there are positive constants $c, C$ such that
\[
|(A+1)(A+1)(A+1)| \gg \frac{|A|^{2+c}}{K^C}.
\]
\end{Theorem}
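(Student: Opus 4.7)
The plan is to apply Theorem~\ref{MainHigherConvex} with $k=2$ to the function $f(x) = \log(1+e^x)$ on $B = \log A$ (which is $k$-convex for every $k$, as observed in the proof of Corollary~\ref{Applications}(2)), and then to combine the resulting lower bound on $|(A+1)^{(4)}/(A+1)^{(3)}|$ with the Pl\"unnecke--Ruzsa inequality and Shkredov's shifted-product bound \eqref{shifts} to extract a lower bound on $|(A+1)(A+1)(A+1)|$.

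\textbf{Setting up.} Writing $B = \log A$, the hypothesis $|AA|\le K|A|$ becomes $|B+B|\le K|B|$, hence $|B+B-B|\le K^3|B|$ by Pl\"unnecke--Ruzsa. Theorem~\ref{MainHigherConvex} with $k=2$ then gives, after exponentiation,
\[
|(A+1)^{(4)}/(A+1)^{(3)}| \gg |A|^3/(K^{O(1)}\log^{O(1)}|A|),
\]
writing $(A+1)^{(j)}$ for the $j$-fold product set of $A+1$ with itself.

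\textbf{From the ratio to $|(A+1)^{(3)}|$.} Two applications of the multiplicative Pl\"unnecke--Ruzsa inequality are used to isolate $|(A+1)^{(3)}|$. First, taking base $X = (A+1)^{(3)}$ and $Y = A+1$, the bound $|mY/nY|\le (|XY|/|X|)^{m+n}|X|$ with $m=4$, $n=3$ yields
\[
|(A+1)^{(4)}/(A+1)^{(3)}|\le |(A+1)^{(4)}|^7/|(A+1)^{(3)}|^6.
\]
Second, Pl\"unnecke's inequality $|XY^2|\le|XY|^2/|X|$ with $X = (A+1)^{(2)}$ and $Y = A+1$ gives $|(A+1)^{(4)}|\le|(A+1)^{(3)}|^2/|(A+1)^{(2)}|$. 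Chaining these,
\[
|(A+1)^{(4)}/(A+1)^{(3)}|\le |(A+1)^{(3)}|^8/|(A+1)^{(2)}|^7.
\]

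\textbf{Closing the argument.} Shkredov's inequality~\eqref{shifts} gives $|(A+1)^{(2)}|\gg|A|^2/(K^4\log|A|)$. Substituting this and comparing with the lower bound from Theorem~\ref{MainHigherConvex} yields $|(A+1)^{(3)}|^8\gg|A|^{17}/K^{O(1)}$ after absorbing polylogs, hence $|(A+1)^{(3)}|\gg|A|^{17/8}/K^C$ for some absolute $C$, which is the desired conclusion with $c=1/8$. The main obstacle is the first Pl\"unnecke step: the naive base $X = A+1$ would only give lower bounds on $|(A+1)^{(2)}|$ already provided by~\eqref{shifts}. Choosing instead $X = (A+1)^{(3)}$ places $|(A+1)^{(3)}|$ in the denominator of the Pl\"unnecke estimate, and it is this, combined with the near-saturation of $|(A+1)^{(2)}|$ from~\eqref{shifts}, that produces the extra $|A|^{1/8}$ beyond the threshold $|A|^2$.
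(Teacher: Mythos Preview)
Your proof is correct and follows essentially the same route as the paper: apply Theorem~\ref{MainHigherConvex} with $k=2$ and $f(x)=\log(1+e^x)$ to obtain the lower bound on $|(A+1)^{(4)}/(A+1)^{(3)}|$, then combine Pl\"unnecke--Ruzsa with Shkredov's bound~\eqref{shifts} to extract $|(A+1)^{(3)}|$. Your Pl\"unnecke chain (base $X=(A+1)^{(3)}$, then $X=(A+1)^{(2)}$) is in fact tighter than the one in the paper, which passes through $|(A+1)^{(4)}/(A+1)^{(4)}|$ and arrives at $|(A+1)^{(3)}|^{16}/|(A+1)^{(2)}|^{15}$; your bound $|(A+1)^{(3)}|^{8}/|(A+1)^{(2)}|^{7}$ yields $c=1/8$ versus the paper's $c=1/16$.
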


\begin{proof}
This argument closely follows the proof of Theorem 3.2 in \cite{RNS}, and a slightly more detailed account can be found there. The important new information comes from an application of Theorem \ref{MainHigherConvex}.

\begin{Claim}
Let $A \subset \mathbb R$ and $|AA| \leq K|A|$. Then
\begin{equation} \label{claimed}
\left |\frac{(A+1)(A+1)(A+1)(A+1)}{(A+1)(A+1)(A+1)} \right | \gg \frac{|A|^3}{K^{18}(\log |A|)^{14}}
\end{equation}

\end{Claim}

\begin{proof} Apply Theorem \ref{MainHigherConvex} with $k=3$ and
\[B=\log A,\ f(x)=\log(e^{x}+1).
\]
Then $|B+B-B|=|AA/A|$ and $|4f(B)-3f(B)|=|(A+1)^{(4)}/(A+1)^{(3)}|$. The claim follows.

\end{proof}
Further applications of Pl\"{u}nnecke-Ruzsa give
\begin{align*}
\frac{|A|^3}{K^{18}(\log |A|)^{14}} \ll \left| \frac{(A+1)(A+1)(A+1)(A+1)}{(A+1)(A+1)(A+1)(A+1)}\right | &\leq \frac{|(A+1)(A+1)(A+1)(A+1)|^4}{|(A+1)(A+1)|^3} 
\\&\leq \frac{|(A+1)(A+1)(A+1)|^{16}}{|(A+1)(A+1)|^{15}}
\end{align*}
It follows from an application of \eqref{shifts} that
\[
|(A+1)(A+1)(A+1)|^{16} \gg \frac{|A|^{33}}{K^{78}(\log|A|)^{29}},
\]
which completes the proof.
\end{proof}

\subsection{$|A+A|$ versus $|A^k+A^k|$} 
Raising the elements of $A$ to a power ought to disrupt any additive structure present in the set $A$. As in the preceding sections, we can deduce Corollary \ref{Applications} part (3) from Theorem \ref{MainHigherConvex}.
\begin{proof}[Proof of Corollary \ref{Applications}, (3)]
Using the Pl\"unnecke inequality, we have $|A+A-A|\leq |A|^{1+3\eps}$. We deduce the conclusion by applying Theorem \ref{MainHigherConvex} to $A$ with the function $f(x)=x^k$. The same is true if $x^{k}$ is replaced by any power of $x$ other than $0,\ldots,k-1$. 

If $f(x)$ is instead a polynomial of degree $k$ then $f$ and its derivatives combine for at most $k!$ roots. We can select a subset $A'$ of size $|A|/k!$ which lies in an interval not containing any of these roots, and adjust $\eps$ if necessary to overcome the factor $k!$.
\end{proof}
The number of sums and differences of $k$'th powers here is an important problem. The case where $K$ is minimized, i.e. where $A=[N]$, reduces to Waring's problem. In this setting, quantitative questions become far more delicate. This is highlighted by the fact that even in the simplest case $k=3$ and $A=[N]$, the true order of magnitude of $|A+A+A|$ is unknown. The best result is $|A+A+A|\gg N^{3\beta}$ with $\beta=0.91709477$, due to Wooley \cite{W}.

\section*{Acknowledgements}Oliver Roche-Newton was partially supported by the Austrian Science Fund FWF Project P 30405-N32. Misha Rudnev is partially supported by the Leverhulme Trust Grant RPG-2017-371. We are grateful to Antal Balog, Peter Bradshaw, Brendan Murphy and Audie Warren for helpful discussions.

\end{document}